\newtheorem{theorem}{Theorem}[section]
\newtheorem{conj}[theorem]{Conjecture}
\newtheorem{prop}[theorem]{Proposition}
\newtheorem{computation}[theorem]{Computation}
\theoremstyle{definition}
\newtheorem{example}[theorem]{Example}
\theoremstyle{remark}
\newtheorem{remark}[theorem]{Remark}
\newcommand{\CC}{\mathbb C}
\newcommand{\PP}{\mathbb P}
\newcommand{\V}{\mathcal{V}}
\newcommand{\I}{\mathcal{I}}
\newcommand{\Sub}{\operatorname{Sub}}
\begin{document}

\title{Toward a salmon conjecture}
\author{Daniel J. Bates$^{1}$}
\address{Department of Mathematics, Colorado State University, Fort Collins, CO 80523 (USA)}
\email{bates@math.colostate.edu}
\thanks{$^{1}$partially supported by NSF grant DMS--0914674.}

\author{Luke Oeding$^{2}$}
\address{
Dipartimento di Matematica ``U. Dini'', Universit\`a di Firenze, Viale Morgagni 67/A, 50134 Firenze (Italy)}
\email{oeding@math.unifi.it}
\thanks{$^{2}$supported by National Science Foundation grant Award No. 0853000: International Research Fellowship Program (IRFP)}

\begin{abstract}
By using a result from the numerical algebraic geometry package Bertini we show that (up to high numerical accuracy) a specific set of degree $6$ and degree $9$ polynomials cut out the secant variety $\sigma_{4}(\PP^{2}\times \PP ^{2} \times \PP ^{3})$. This, combined with an argument provided by Landsberg and Manivel (whose proof was corrected by Friedland), implies set-theoretic defining equations in degrees $5$, $6$ and $9$ for a much larger set of secant varieties, including $\sigma_{4}(\PP^{3}\times \PP ^{3} \times \PP ^{3})$ which is of particular interest in light of the salmon prize offered by E. Allman for the ideal-theoretic defining equations.\end{abstract}

\maketitle

\section{Introduction}

In 2007, E. Allman offered a prize of Alaskan salmon to anyone who finds the defining ideal of the following secant variety \[
\sigma_{4}\left(\PP^{3}\times \PP ^{3}\times \PP^{3}\right)
,\]
\cite{prize}.  Recall that if $A,B,C$ are vector spaces, then the Segre product is defined by the following embedding into the tensor product
\begin{align*}
Seg\colon \PP A \times \PP B \times \PP C &\rightarrow \PP (A\otimes B\otimes C) \\
([a],[b],[c]) &\mapsto [a\otimes b\otimes c]
.\end{align*}
Further recall that if $X \subset \PP^{N}$ is a variety, then the $k$-secant variety of $X$ denoted  $\sigma_{k}(X)\subset\PP^{N}$, is the Zariski closure of all points on secant $\PP^{k-1}$'s to $X$.
For simplicity, we will drop the reference to the Segre embedding and write $\sigma_{k}(\PP A\times \PP B \times \PP C)$ for the secant variety to the Segre product.
%
  Secant varieties have been studied classically, but we have a renewed interest in their study because of the salmon prize and other related recent works on the subject (see \cite{LM08,CGG5, LBull, AllmanRhodes08, Friedland2010, LWSecSeg, SS}).

Allman's ideal-theoretic question is still open. Our main result is Theorem \ref{thm:main}, in which we give a geometric argument (relying on results of Landsberg and Manivel \cite[Corollary 5.6]{LM08} and the recent correction of the proof by Friedland in \cite{Friedland2010}) combined with a calculation using numerical algebraic geometry to show that up to high numerical accuracy, $\sigma_{4}\left(\PP^{3}\times \PP ^{3}\times \PP^{3}\right)$  is cut out set-theoretically by $1728$ equations in degree $5$, $1000$ equations in degree $6$ and $8000$ equations in degree $9$.  
Even though these dimensions are large, we show that in each degree, the large space of polynomials can be constructed from a small number of representatives via substitutions (see Remarks \ref{M6 basis}, \ref{M5 basis} and \ref{M9 basis}).
Theorem \ref{thm:main} solves the set-theoretic version of Allman's question (up to high numerical accuracy), uses equations of lower degree than Friedland's solution \cite{Friedland2010}, and gives evidence for a conjecture to the ideal-theoretical question asked by Allman.

One practical interest of the secant variety $\sigma_{4}\left(\PP^{3}\times \PP ^{3}\times \PP^{3}\right)$ is in phylogenetics, where the secant variety is associated to the statistical model for evolution called the mixture model of independence models \cite{AllmanRhodes08}. The main motivation to study this particular model is that Allman and Rhodes showed in  \cite[Theorem~11]{AllmanRhodes08} that finding the polynomial invariants for this small evolutionary tree would provide all polynomial invariants for the statistical model for \emph{any} binary evolutionary tree with \emph{any} number of states.  

Note that in this paper we work exclusively over the complex numbers, however, in phylogenetics one is often interested in studying models restricted to the real numbers, the positive real numbers, or the probability simplex.  Since equations for a given model considered over the complex numbers also provide equations for the restricted model, it is natural to start with the complex setting and then study the additional necessary equations and inequalities imposed by the given restriction.  We leave this further study to other works.

While Allman asks for the generators of the defining ideal of the secant variety, a collection of set-theoretic defining equations provides a necessary and sufficient test for membership on the model.
Very recently Friedland \cite{Friedland2010} has proved (without a computer) that a set of polynomials in degrees $5, 9$ and $16$ define $\sigma_{4}\left(\PP^{3}\times \PP ^{3}\times \PP^{3}\right)$ set-theoretically. Indeed, Friedland's set of polynomials do (in theory) allow one to test whether a given set of data fits the model.  
Because it uses polynomials in smaller degree, Theorem~\ref{thm:main} provides a more efficient practical membership test for the model. 

On the other hand, Casanellas and Fernandez-Sanchez \cite{Casanellas} have studied
more practical issues regarding phylogenetic tree construction using algebraic methods.  In particular, they point out that for phylogenetic tree reconstruction, the equations coming from the edges of the tree (minors of flattenings below) seem to be more relevant than the equations coming from vertices (the equations of degrees 5 and 9 are examples of such).

Our equations in degree $6$ are not in the ideal of the equations in degree $5$, thus they are non-trivial generators in the ideal, and Friedland's result cannot be a set of minimal generators of the ideal.  We have not found any such obstructions to our result holding ideal-theoretically and this leads to a salmon conjecture that the ideal-theoretic version of Theorem~\ref{thm:main} also holds.

This work was initiated in October 2008 when Bernd Sturmfels asked for a Macaulay2 readable file of the degree $6$ polynomials in the ideal of $\sigma_{4}(\PP^{2}\times \PP^{2}\times \PP^{3})$.  Proposition \ref{prop:deg6ideal} is a representation theoretic description of these polynomials and corrects minor errors in \cite[Proposition~6.3]{LM04}, and \cite[Remark~5.7]{LM08}. In Section~\ref{sec:polys} we give a brief overview of how these polynomials were constructed from their representation theoretic description. 
These equations and other ancillary materials for this paper are available in the ancillary materials which accompany the arXiv version of this paper or by contacting either author.

At the December 2008 MSRI workshop on Algebraic Statistics, Oeding presented Conjecture \ref{conj:m6} which, when combined with an argument of Landsberg and Manivel, implies our main result.  This argument is discussed in Section~\ref{sec:geometry}.  The missing ingredient for the conjecture was to understand the zero-set of the degree $6$ polynomials. Shortly after this workshop, the Oeding asked for help from Bates and the Bertini Team.  

The two authors worked together to get the correct mixture of initial input and computing strategies in order to find a computation that would finish in a reasonable amount of time.  Finally on July 12, 2010, a computation that had taken approximately 2 weeks on 8 processors (two 2.66 GHz quad-core Xeon 5410s set up as one head processor and seven worker processors) finished, providing a numerical proof to Conjecture \ref{conj:m6}.  Because our calculations use numerical approximations, we say that the proof holds up to high numerical accuracy. In Section~\ref{sec:Bertini} we discuss our computational methods and the reliability of this result.

\section{Symmetry and the equations in degree $6$}\label{sec:polys}
In this section we recall well-known facts about the variety and equations we are studying.  The main purpose is to set up notation.  The reader who is unfamiliar with these concepts may consult \cite{FH}, or for a more detailed account related to secant varieties see \cite{LM04,LM08,LWSecSeg} or the upcoming \cite{LandsbergTensorBook}. 

Let $A,B,C$ be vector spaces of dimensions $a,b,c$ respectively.
The symmetry group of $
\sigma_{r}\left(\PP A \times \PP B \times \PP C \right)$ 
is the change of coordinates in each factor
$ GL(A)\times GL(B) \times GL(C)$
(or when $A\cong B \cong C$ there is an additional symmetric group $\mathfrak{S}_{3}$ acting and the symmetry group is  $\left(GL(A)\times GL(B) \times GL(C)\right) \ltimes \mathfrak{S}_{3}$  ).
Therefore we can use tools from representation theory to aid in our search for defining equations.  Since much of this work has already been done, we only describe the equations relevant for our application.

The module $S^{d}(A^{*}\otimes B^{*} \otimes C^{*})$ of degree $d$ homogeneous polynomials on $A\otimes B\otimes C$ has an \emph{isotypic decomposition} (see \cite[Proposition~4.1]{LM04})
\[
S^{d}(A^{*}\otimes B^{*} \otimes C^{*} ) = \bigoplus_{|\pi_{1}|=|\pi_{2}|=|\pi_{3}|=d}  \left(S_{\pi_{1}}A^{*} \otimes S_{\pi_{2}}B^{*} \otimes S_{\pi_{3}} C^{*} \right) ^{\oplus m_{\pi_{1},\pi_{2},\pi_{3}}}
,\]
where the $\pi_{i}$ are partitions of $d$ and the multiplicity $m_{\pi_{1},\pi_{2},\pi_{3}}$ is the dimension of the highest weight space which can be computed via characters.
The modules
\[
(S_{\pi_{1}}A^{*} \otimes S_{\pi_{2}}B^{*} \otimes S_{\pi_{3}} C^{*}) ^{m_{\pi_{1},\pi_{2},\pi_{2}}}
\]
are called \emph{isotypic components}, and the individual modules $S_{\pi_{1}}A^{*} \otimes S_{\pi_{2}}B^{*} \otimes S_{\pi_{3}} C^{*}$ are irreducible $GL(A)\times GL(B) \times GL(C)$-modules sometimes called \emph{Schur modules}. 

The ideal of any $ GL(A)\times GL(B) \times GL(C)$-invariant variety in $\PP(A\otimes B\otimes C)$ consists of a subset of the modules occurring in the isotypic decomposition.
If $X$ is a projective variety, let  $\I_{s}(X)$ denote the ideal of homogeneous degree $s$ polynomials in the ideal of $X$.
In general, if $X$ is any variety with ideal generated in degree $2$ (of which the Segre variety is an example), $\I_{s}(\sigma_{k}(X)) = 0$ for $s\leq k$ (see \cite[Corollary 3.2]{LM04}), and in particular,  $\I_{s}(\sigma_{4}(\PP A \times \PP B \times \PP C)) = 0$ for $s \leq 4$.
Also, one can calculate (by checking every irreducible module of degree $5$ polynomials) that 
\[\I_{5}\left(\sigma_{4}\left( \PP^{2} \times \PP^{2} \times \PP^{3} \right)\right) = 0.\]
In addition, we have found the following.
\begin{prop}\label{prop:deg6ideal}  Let $A \cong B\cong \CC^{3} $, $C\cong \CC^{4}$, and let $M_{6}$ denote the module 
$S_{2,2,2}A^{*} \otimes S_{2,2,2} B^{*} \otimes S_{3,1,1,1} C^{*}$.
Then $M_{6}=\I_{6}\left(\sigma_{4}\left( \PP A \times \PP B \times \PP C \right)\right)$ as $GL(A)\times GL(B) \times GL(C)$-modules. \end{prop}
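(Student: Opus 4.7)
The proof has two sides: (i) the containment $M_6 \subseteq \I_6(\sigma_4(\PP A \times \PP B \times \PP C))$, and (ii) no other irreducible Schur module contributes to the ideal in degree $6$. I would first dispose of (ii), since it reduces the problem to a finite list of candidates, and then tackle (i). The main tool for (ii) is inheritance: if $A' \subseteq A$, $B' \subseteq B$, $C' \subseteq C$ are subspaces, then a tensor in $A' \otimes B' \otimes C'$ has the same $\sigma_4$-membership in either format, so restriction gives a map $\I_6(\sigma_4(\PP A \times \PP B \times \PP C)) \to \I_6(\sigma_4(\PP A' \times \PP B' \times \PP C'))$. A Schur module $S_{\pi_1}A^* \otimes S_{\pi_2}B^* \otimes S_{\pi_3}C^*$ restricts to zero iff $\ell(\pi_i) > \dim V_i'$ for some $i$. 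Since $\sigma_4$ fills the ambient space in every format strictly smaller than $(3,3,4)$ (by a direct expected-dimension computation together with the known non-defectivity of three-factor Segre secants in these small cases), any module in $\I_6$ must restrict to zero for each such sub-format.

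Taking $(a',b',c') = (2,3,4),\,(3,2,4),\,(3,3,3)$ in turn forces $\ell(\pi_1) = \ell(\pi_2) = 3$ and $\ell(\pi_3) = 4$. Together with $|\pi_i| = 6$ this leaves $\pi_1, \pi_2 \in \{(4,1,1),(3,2,1),(2,2,2)\}$ and $\pi_3 \in \{(3,1,1,1),(2,2,1,1)\}$, yielding $18$ candidate triples. For each of the $17$ triples other than $((2,2,2),(2,2,2),(3,1,1,1))$, I would construct an explicit highest weight vector via Young symmetrizers, evaluate it on a generic sum of four rank-one tensors, and check it is non-zero; a single non-vanishing example rules that whole module out of $\I_6$. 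These evaluations are mechanical and naturally delegated to a computer algebra system such as Macaulay2, with the representation-theoretic bookkeeping (multiplicities $m_{\pi_1,\pi_2,\pi_3}$, dimensions, explicit highest weight vectors) aided by character software such as LiE.

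For the containment (i), the plan is to exhibit a single non-zero highest weight vector of $M_6$ as a polynomial whose vanishing on $\sigma_4$ is transparent. The natural construction is via a Young flattening: an equivariant linear map $\Phi_T$, linear in the coordinates of $T \in A \otimes B \otimes C$, between two appropriate Schur-constructed $GL(A) \times GL(B) \times GL(C)$-modules, chosen so that $\Phi_T$ has bounded rank when $T$ has small tensor rank. Selecting source and target so that the minors of the appropriate size land in $M_6$ (after weight-checking against the $(2,2,2) \otimes (2,2,2) \otimes (3,1,1,1)$ shape, up to $\det$-twists), one shows by a direct rank estimate that these minors vanish on $\sigma_4$. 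Since the multiplicity of $M_6$ in $S^6(A^* \otimes B^* \otimes C^*)$ is $1$ (a character computation, consistent with $\dim M_6 = 10$), this single highest-weight polynomial generates the full module, establishing the containment.

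The principal obstacle I anticipate is constructing the right Young flattening whose rank estimate on rank-$4$ tensors is tight enough to produce genuine equations in degree exactly $6$ rather than a higher degree---bearing in mind that $\I_5 = 0$, so the simplest ordinary-flattening arguments cannot apply. This is the representation-theoretic heart of the proof, and it is exactly the step where the minor slips in the analysis of \cite{LM04, LM08} arose and are corrected here.
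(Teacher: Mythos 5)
Your overall architecture (enumerate candidate Schur modules in degree $6$, rule out all but one by evaluation, then prove vanishing of the survivor) is in the same spirit as the paper, whose proof is precisely a computational membership test: decompose $S^6(A^*\otimes B^*\otimes C^*)$ into isotypic components, compute a basis of each highest weight space, and evaluate on the variety; only $M_6$, which occurs with multiplicity one, vanishes. But two steps of your reduction are flawed as written. First, your inheritance argument rests on the claim that $\sigma_4$ fills the ambient space in every format strictly smaller than $(3,3,4)$, citing ``non-defectivity in these small cases.'' This is false for the $(3,3,3)$ format: $\sigma_4(\PP^2\times\PP^2\times\PP^2)$ is the classical \emph{defective} example, a hypersurface of degree $9$ in $\PP^{26}$ (Theorem \ref{thm:Strassen}). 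Your conclusion $\#(\pi_3)=4$ still holds, but only because the ideal of that hypersurface is principal and generated in degree $9$, so it contains no degree-$6$ forms --- a different input from the one you invoke. Second, and more seriously, excluding a candidate triple by checking that \emph{one} highest weight vector fails to vanish on a generic sum of four rank-one tensors is insufficient whenever the Kronecker multiplicity $m_{\pi_1,\pi_2,\pi_3}$ in $S^6(A^*\otimes B^*\otimes C^*)$ exceeds one, which does happen among your $18$ candidates (for instance when $\pi_1=\pi_2=(3,2,1)$): the ideal could contain a copy of the module realized as a nontrivial linear combination inside the multi-dimensional highest weight space. This is exactly why the paper's proof tests whether \emph{any linear subspace} of the highest weight space of each isotypic component vanishes on the variety; your test must likewise show that evaluation of the full highest weight space at (symbolic or sufficiently many generic) points of $\sigma_4$ has trivial kernel, not merely that one distinguished vector survives.

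For the containment $M_6\subseteq\I_6$, your proposal hinges on an unconstructed Young flattening whose source, target, and rank bound you do not specify, and which you yourself flag as the principal obstacle; as written this is a plan, not a proof. No such construction is needed: since $M_6$ is irreducible and occurs with multiplicity one, it suffices to build its highest weight vector explicitly with the Young symmetrizer machinery you already set up for the exclusion step, and to verify that it vanishes identically when evaluated at a general point $\sum_{i=1}^{4}a_i\otimes b_i\otimes c_i$ with indeterminate coordinates; vanishing on this parameterized dense subset gives vanishing on $\sigma_4$, and equivariance then puts the entire module in the ideal. This direct symbolic evaluation is how the paper establishes both the inclusion of $M_6$ and the exclusion of every other isotypic component, so patching your argument amounts to replacing the speculative flattening by the evaluation test you already use elsewhere, carried out on whole highest weight spaces.
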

\begin{proof}
The module $M_{6}$ was found by following the ideal membership test described in \cite{LM04}.  We repeated the procedure outlined in \cite{LM04} as follows.
We first decomposed $S^{6}(A^{*}\otimes B^{*}\otimes C^{*})$ into its isotypic decomposition.  This plethysm calculation can be done in one line on the program LiE \cite{LiE} as 
\begin{verbatim}
 plethysm([6],[1,0,1,0,1,0,0],A2A2A3)
\end{verbatim}
or by using the procedure ``mults'' which we implemented in maple and can be found in the file ``iso\_mults.mw,'' available with our ancillary materials.
Next we computed a basis of the highest weight space for each isotypic component.  We implemented in Maple a standard algorithm to compute a basis of the highest weight space in the image of the relevant Schur functors associated to each module. This implementation is in the file called ``poly\_make\_algo.mw'' which also may be found with our ancillary materials.  A detailed exposition of this concept may be found below. 
 We applied this algorithm to each module in the isotypic decomposition and
 we checked to see if any linear subspace of the highest weight space of an isotypic component vanished on the variety by direct evaluation.  The only module which passed this test was $M_{6}$, which occurs with multiplicity one in $S^{6}(A^{*}\otimes B^{*}\otimes C^{*})$.
\end{proof}

We note that there was some confusion between the statements and proofs in the pre-print and the print version of \cite[Proposition~6.3]{LM04} as well as in the statement \cite[Remark~5.7]{LM08}, and we believe that Proposition \ref{prop:deg6ideal} corrects this confusion.

The module $S_{2,2,2}\CC^{3}$ is one-dimensional and 
as a vector space, the module $S_{3,1,1,1}\CC^{4}$ is isomorphic to $S^{2}\CC^{4}$, which is $10$-dimensional.  Our construction produces a basis of the module $M_{6}$ consisting of $10$ polynomials which also correspond to the $10$ semi-standard fillings (strictly increasing in the columns and non-decreasing in the rows) of the tableau of shape $(3,1,1,1)$ with the numbers $1,2,3,4$. We list these fillings below.  
The basis of polynomials is contained in the file ``deg\_6\_salmon.txt'' which is available with our ancillary materials as mentioned above.

Here is a brief overview of an algorithm to construct the polynomials in $S_{\pi_{1}}A^{*} \otimes S_{\pi_{2}}B^{*} \otimes S_{\pi_{3}}C^{*}$. While this algorithm is based on classical methods, we refer the reader to the works \cite{LandsbergTensorBook, oeding_thesis, OedingTan} which use similar language for more details. We point out that the complexity of any algorithm to compute polynomials from Schur modules will depend on dimension and degree.  This piece-by-piece algorithm attempts work with the smallest dimensional space possible at each step, thus reducing the complexity and increasing the chances that the computation will finish in a reasonable amount of time.

For concreteness, we fix the degree $d=6$.
The input to the algorithm is the fillings of the tableau of shapes $\pi_{1},\pi_{2},\pi_{3}$.  
The first step is to construct a highest weight vector in $A^{\otimes 6}\otimes B^{\otimes 6} \otimes C^{\otimes 6}$.  For this we work one vector space at a time. Suppose $a_{1},a_{2},a_{3}$ is a basis of $A^{*}$. Then $a_{1}\otimes a_{1}\otimes a_{2}\otimes a_{2}\otimes a_{3} \otimes a_{3}$ is a \emph{pre-highest weight vector}.
The Young symmetrizer 
\[Y_{\pi_{1}}:A^{*}\otimes A^{*}\otimes A^{*}\otimes A^{*}\otimes A^{*}\otimes A^{*}  \rightarrow A^{*}\otimes A^{*}\otimes A^{*}\otimes A^{*}\otimes A^{*}\otimes A^{*}\]
 is the map that skew symmetrizes the vector spaces $A^{*}$ in positions corresponding to the columns of the filling associated to $\pi_{1}$ and then symmetrizes the vector spaces corresponding to the rows of the filling associated to $\pi_{1}$.  The image of the pre-highest weight vector is a highest weight vector of $S_{\pi_{1}}A$ in $(A^{*})^{\otimes 6}$. We perform the analogous construction in the $B^{*}$ and $C^{*}$ factors and take the tensor product of the resulting highest weight vectors.

The resulting vector we have constructed is in 
$S_{\pi_{1}}A^{*}\otimes S_{\pi_{2}}B^{*}\otimes S_{\pi_{3}}C^{*}$,
however it is embedded in $(A^{*})^{\otimes 6} \otimes (B^{*})^{\otimes 6} \otimes (C^{*})^{\otimes 6}$.  
The final step is to perform the re-ordering isomorphism 
$(A^{*})^{\otimes 6} \otimes (B^{*})^{\otimes 6} \otimes (C^{*})^{\otimes 6} \rightarrow (A^{*} \otimes B^{*} \otimes C^{*})^{\otimes 6}$, and then symmetrize the result to arrive at a polynomial in $S^{6}(A^{*}\otimes B^{*}\otimes C^{*})$.

We computed the 10 polynomials in $S_{2,2,2}A^{*} \otimes S_{2,2,2} B^{*} \otimes S_{3,1,1,1} C^{*}$ using the fixed fillings 
\[\young(12,34,56)\;\;,\;\;\young(14,25,36)\] for $\pi_{1}$ and $\pi_{2}$ respectively with each of the following fillings for $\pi_{3}$
\begin{gather*}
\young(111,2,3,4), \young(112,2,3,4), \young(113,2,3,4), \young(114,2,3,4),
\young(122,2,3,4), \young(123,2,3,4), \young(124,2,3,4),
\\
\young(133,2,3,4), \young(134,2,3,4), 
\young(144,2,3,4).
\end{gather*}
Notice that up to re-naming the numbers, the fillings for $\pi_{3}$ can be divided into two classes, depending on whether the last two numbers in the first row are equal.  The four fillings of the first class (with the last two numbers in the first row equal) correspond to polynomials with $936$ terms, whereas the six fillings of the second class correspond to polynomials with $576$ terms. 

Denote by $p_{i,j,k}$, ( $1\leq i,j \leq 3$ and $1\leq k\leq 4$) a basis of  $A^{*}\otimes B^{*}\otimes C^{*} \cong \CC^{3}\otimes \CC^{3}\otimes \CC^{4}$.
Then define the swap $p_{i,j,k} \leftrightarrow p_{i,j,l}$ for fixed $k,l$ and for all $1\leq i,j \leq 3$.  Up to sign, this swap takes the polynomial associated to the filling $\young(1kk,2,3,4)$ to the polynomial associated to the filling 
$\young(1ll,2,3,4)$, and if $m$ is different than $k$ and $l$, the swap takes the polynomial associated to the filling $\young(1km,2,3,4)$ to the one associated to $\young(1lm,2,3,4)$.
This additional symmetry could be useful for the Bertini computation, however our computation completed without the need to implement this symmetry, so we did not use it.  We hope to exploit this for future work.

These fillings produce homogeneous polynomials that are, moreover, homogeneous in multi-degree.  In general, the \emph{multi-degree} of a monomial is a collection of vectors
$[[l^{A}_{1},l^{A}_{2},l^{A}_{3}]$, $[l^{B}_{1},l^{B}_{2},l^{B}_{3}]$, $[l^{C}_{1},l^{C}_{2},l^{C}_{3},l^{C}_{4}]]$, and is defined on a single variable $x_{i,j,k}$ by  $l^{A}_{i'}$ is $0$ (respectively $1$) for $x_{i,j,k}$ if $i\neq i'$ (respectively $i=i'$) ($l^{B}_{j'}$ and $l^{C}_{k'}$ are defined similarly) and the multi-degree is defined for monomials by declaring it to be additive over products of variables.
For example, the following is a sampling of terms in the highest weight polynomial corresponding to the filling $\young(111,2,3,4)$,
\[\dots
-x_{321}x_{113}x_{211}x_{221}x_{134}x_{332} -x_{321}x_{122}x_{231}^2x_{313}x_{114} +x_{211}x_{312}x_{131}x_{121}x_{334}x_{223} 
\dots\]
and one finds that this polynomial has multi-degree $[[2,2,2],[2,2,2],[3,1,1,1]]$.

\begin{remark}\label{M6 basis}
Note that when $a=b=3$ and $c=4$, $S_{2,2,2}A^{*} \otimes S_{2,2,2} B^{*} \otimes S_{3,1,1,1} C^{*}$ is $10$-dimensional.  When $a=b=c=4$, the dimension of $S_{2,2,2}A^{*} \otimes S_{2,2,2} B^{*} \otimes S_{3,1,1,1} C^{*}$ increases to $1000$, however the basis of this larger space can still be constructed from the two polynomials that have 576 and 936 monomials via the type of swap of variables described above for the index $k$ in $p_{ijk}$, but also allowing similar swaps for each of the indices $i$ and $j$.
\end{remark}

\section{Geometric techniques for secant varieties}\label{sec:geometry}
Suppose $A'\subset A$, $B'\subset B$ and $C'\subset C$. Landsberg and Manivel have shown  how to take equations on $\sigma_{r}(\PP A' \times \PP B' \times \PP C')$  to equations on $\sigma_{r}(\PP A \times \PP B \times \PP C)$ and call this procedure \emph{inheritance}  \cite[Proposition~4.4]{LM04}.

Subspace varieties contain tensors that can be written using fewer variables. More specifically,
\begin{align*} \Sub_{a',b',c'} (A\otimes B\otimes C) := \big\{ [T]\in \PP ( A\otimes B\otimes C) \mid  \exists \CC^{a'}\subseteq A, \\
 \CC^{b'} \subseteq B, \CC^{c'} \subseteq C, \text{ with } [T] \in \PP( \CC^{a'}\otimes \CC^{b'}\otimes \CC^{c'} )
\big\}
.\end{align*}
Landsberg and Weyman have shown that 
$\Sub_{a',b',c'} (A\otimes B \otimes C)$ is normal with rational singularities, and the ideal is generated by minors of flattenings \cite[Theorem~3.1]{LWSecSeg}. Recall that a flattening of a $3$-tensor in $A\otimes B\otimes C$ is the choice to view it as a matrix in $A\otimes (B\otimes C)$, $B\otimes (A\otimes C)$ or $(A\otimes B)\otimes C$.

The subspace varieties are important in light of equations because of the fact that
\[\Sub_{r,r,r}(A\otimes B \otimes C) \supseteq \sigma_{r}(\PP A\times \PP B \times \PP C),\] and therefore when non-trivial, the ideal of $\Sub_{r,r,r}$ gives equations of $\sigma_{r}$.  There is an easy test for a module to be in the ideal of a subspace variety, namely $S_{\pi_{1}}A^{*} \otimes S_{\pi_{2}} B^{*} \otimes S_{\pi_{3}}C^{*}$ is in the ideal of $\Sub_{a',b',c'} (A\otimes B \otimes C)$ if and only if at least one of the following holds; $\#(\pi_{1})>a'$, $\#(\pi_{2})>b'$ or $\#(\pi_{3})>c'$, where $\#(\cdot)$ is the number of parts of the partition. 

Landsberg and Manivel made an important reduction for the salmon problem, which we record here.  Friedland pointed out that their proof contained an error, which he corrected in \cite{Friedland2010}.
Let $a,b,c$ respectively denote the dimensions of $A,B,C$.
\begin{theorem}[Landsberg-Manivel, Friedland]\label{LMF}  As sets, for $a,b,c \geq 3$, \\ $\sigma_{4}\left( \PP^{ a-1} \times \PP^{b-1} \times \PP^{c-1}\right)$ is the zero-set of the union of:
\begin{enumerate}
\item Strassen's commutation conditions,
\begin{align*}M_{5} :=
        S_{(3,1,1)}A^{*} \otimes S_{(2,1,1,1)} B^{*} \otimes S_{(2,1,1,1)} C^{*}  \\
 \oplus S_{(2,1,1,1)}A^{*} \otimes S_{(3,1,1)} B^{*} \otimes S_{(2,1,1,1)} C^{*} \\
 \oplus S_{(2,1,1,1)}A^{*} \otimes S_{(2,1,1,1)} B^{*} \otimes S_{(3,1,1)} C^{*},
 \end{align*}

\item Equations inherited from $\sigma_{4}\left( \PP^{2} \times \PP^{2} \times \PP^{3} \right)$, and 

\item Modules in $S^{5}(A^{*}\otimes B^{*}\otimes C^{*})$ containing a $\textstyle{\bigwedge^{5}}$, \emph{i.e.} equations for $\Sub_{4,4,4}$.
\end{enumerate}
 \end{theorem}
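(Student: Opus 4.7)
The plan is a two-stage reduction followed by a case analysis. Let $T$ be a tensor in $A \otimes B \otimes C$ on which all three families of equations in (1), (2), (3) vanish; the goal is to place $T$ in $\sigma_{4}(\PP A \times \PP B \times \PP C)$.

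Stage 1 (reduction to the $4\times 4 \times 4$ case). By the ideal-membership criterion for subspace varieties recalled just before the theorem, a module $S_{\pi_{1}}A^{*} \otimes S_{\pi_{2}}B^{*} \otimes S_{\pi_{3}}C^{*}$ belongs to the ideal of $\Sub_{4,4,4}$ exactly when some $\pi_{i}$ has more than four parts. In degree $5$ the only $5$-part partition is $(1,1,1,1,1) = \bigwedge^{5}$, so the modules in (3) are precisely the degree-$5$ modules in the ideal of $\Sub_{4,4,4}$. Their vanishing on $T$ forces $T \in \CC^{4} \otimes \CC^{4} \otimes \CC^{4} \subseteq A \otimes B \otimes C$, and since all the equations in (1), (2), (3) are $GL$-invariant and restrict correctly to any four-dimensional subspace in each factor, we may assume $a = b = c = 4$.

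Stage 2 (Strassen's argument in the generic case). Identify $T$ with a triple of $4\times 4$ matrices $T_{1},T_{2},T_{3}$ via a basis of $C^{*}$. The summand $S_{(2,1,1,1)}A^{*} \otimes S_{(2,1,1,1)}B^{*} \otimes S_{(3,1,1)}C^{*}$ of $M_{5}$ is the classical Strassen commutation module: on the locus where some slice $T_{\ell}$ is invertible, its vanishing is equivalent to the commutativity of the conjugate matrices $T_{\ell}^{-1}T_{k}$, with a polynomial formulation that uses $\adj(T_{\ell})$ in place of $T_{\ell}^{-1}$. The other two summands of $M_{5}$ carry the analogous statements after interchanging the roles of $A,B,C$. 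When some pencil of slices in at least one factor contains an invertible matrix, the commuting conjugates can be simultaneously triangularized; reading off the simultaneous eigendata exhibits $T$ as a sum of four rank-one tensors, hence $T \in \sigma_{4}$.

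Stage 3 (degenerate strata, where Friedland's correction enters). The main obstacle is the stratum on which no pencil of slices in any factor contains an invertible matrix, so Stage 2 does not apply directly. The plan is to show that on this locus the multilinear rank of $T$ must drop in at least one factor, placing $T \in \Sub_{3,4,4}(A \otimes B \otimes C)$ up to permutation of the three factors. Reapplying the subspace-variety criterion from (3) within this smaller slice and the inheritance principle, one reduces to $T \in \CC^{3} \otimes \CC^{3} \otimes \CC^{4}$, and the equations in (2) --- being the defining equations of $\sigma_{4}(\PP^{2} \times \PP^{2} \times \PP^{3})$ pulled back by inheritance --- then place $T$ in $\sigma_{4}(\PP^{2} \times \PP^{2} \times \PP^{3}) \subseteq \sigma_{4}(\PP A \times \PP B \times \PP C)$. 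The sharp point, and presumably the gap in \cite{LM08} that \cite{Friedland2010} fills, is to justify that the ``no invertible pencil'' locus really is contained in the union of degenerate subspace varieties, and to glue the Strassen regime and the inheritance regime into a single clean dichotomy. I expect this semicontinuity analysis on ranks within pencils of singular slices, together with the bookkeeping on Jordan structures of the commuting conjugates, to be the most technical part of the proof.
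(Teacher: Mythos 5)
First, note that the paper you are working from does not prove Theorem~\ref{LMF} at all: it records the statement from \cite[Corollary~5.6]{LM08} together with Friedland's correction \cite{Friedland2010}, and the authors explicitly abandoned their own attempt to recover the key intermediate statement (\cite[Proposition~5.4]{LM08}) numerically. So there is no internal proof to match; your proposal has to stand on its own. Your Stage~1 (the modules containing a $\bigwedge^{5}$ cut out $\Sub_{4,4,4}$, then inheritance reduces to $a=b=c=4$) is correct and is the standard first reduction, and the generic part of Stage~2 is the classical Strassen argument --- though even there, simultaneous \emph{triangularization} does not exhibit a rank-four decomposition; you need simultaneous diagonalizability, and in the non-diagonalizable case a closure argument (density of simultaneously diagonalizable tuples inside commuting tuples, which for triples of $4\times 4$ matrices is true but not free) to land in $\sigma_{4}$, which is only a closure of the rank-four locus.

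The genuine gap is Stage~3, and it is exactly the point where the original argument of \cite{LM08} failed. Your claim that on the ``no invertible slice in any factor'' locus the multilinear rank must drop is essentially the content of \cite[Proposition~5.4]{LM08} describing the zero set of the commutation module on $\CC^{3}\otimes\CC^{4}\otimes\CC^{4}$, and Friedland produced a counterexample to that description: the commutation conditions vanish on tensors that are neither in $\sigma_{4}$ nor in the expected degenerate subspace loci, so the dichotomy you posit cannot simply be asserted and requires the detailed analysis that \cite{Friedland2010} supplies. Moreover, even granting a rank drop in one factor, your reduction is off: a drop to multilinear rank $(3,4,4)$ does not put $T$ into $\CC^{3}\otimes\CC^{3}\otimes\CC^{4}$, and the equations in item~(2), inherited from $\sigma_{4}(\PP^{2}\times\PP^{2}\times\PP^{3})$, are only known (by inheritance) to cut out the secant variety after restriction to a $\CC^{3}\otimes\CC^{3}\otimes\CC^{4}$ slice; their zero locus on the $(3,4,4)$ stratum is not $\sigma_{4}$, so that stratum must be handled by the degree-$5$ module directly --- which is precisely the corrected Proposition~5.4. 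Since you yourself flag this step as a ``plan'' you ``expect'' to work, the proposal as written does not constitute a proof of the theorem; it reproduces the easy reductions and leaves the crux (Friedland's contribution) open.
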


Note that when $a=b=c=4$, the third set of equations is trivial.
The key point is that we will have a complete description of the set-theoretic defining equations of $\sigma_{4}(\PP^{3}\times \PP^{3} \times \PP ^{3})$  as soon as we have the equations of $\sigma_{4}(\PP^{2}\times \PP^{2} \times \PP ^{3})$.

\begin{remark}\label{M5 basis}
The equations in degrees $5$ as well as equations in degree $9$ inherited from $\sigma_{4}\left( \PP^{2} \times \PP^{2} \times \PP^{3} \right)$ were found by Strassen \cite{Strassen83} and were described in terms of certain commutation conditions. Later, Landsberg and Manivel \cite{LM08} reinterpreted these conditions from the geometric and representation theoretic point of view and provided generalizations in this language. In \cite{SturmfelsOpenProblems} one finds a nice description of these equations requiring only basic linear algebra.  Analogous to our description of the equations in degree $6$, here we give the representation theoretic description of the polynomials of degree $5$.

Note also that when  $a=b=c=4$,  $M_{5}$ is a $1728$-dimensional irreducible $G$-module, for $G = \left(GL(4)\times GL(4)\times GL(4)\right)\ltimes \mathfrak{S_{3}}$.  A natural basis of $M_{5}$ can be constructed as in the previous section. For this we need to give the fillings for the triple of Young diagrams corresponding to the partitions $(2,1,1,1), (2,1,1,1), (3,1,1)$.
We note that up to permutation, there is just one equivalence class for the fillings of the diagram for $(2,1,1,1)$ with representative $\young(11,2,3,4)$. There are three equivalence classes for the fillings of the diagram for $(3,1,1)$ with representatives $\young(111,2,3)$, $\young(112,2,3)$ and $\young(112,3,4)$.

Therefore to construct representatives for a basis of $S_{(2,1,1,1)}A^{*} \otimes S_{(2,1,1,1)} B^{*} \otimes S_{(3,1,1)} C^{*}$, we fix the representative filling for $(2,1,1,1)$ in both instances, and we let the filling for $(3,1,1)$ vary over the three representatives.
Thus we construct three polynomials, one for each representative filling of the diagram for $(3,1,1)$ and respectively, these polynomials have $180$, $360$ and  $540$ monomials.  A basis of polynomials for one of the $3$ isomorphic modules in $M_{5}$ is contained in the file ``deg\_5\_salmon.txt'' with our ancillary materials.
After constructing these three polynomials, the rest of the polynomials in the basis of $M_{5}$ can be constructed by the substitutions and swaps of variables as mentioned above (see the discussion above Remark \ref{M6 basis}).
\end{remark}

Another important result for the salmon problem is from Strassen, which has been reinterpreted in representation theoretic language in \cite{LM08}.
\begin{theorem}[{\cite{Strassen83}}]\label{thm:Strassen}
The ideal of the hypersurface $\sigma_{4}(\PP^{2}\times \PP^{2} \times \PP^{2}) \subset \PP^{26} $ is generated in degree $9$ by a nonzero vector in the $1$-dimensional module
\[S_{(3,3,3)} \CC^{3} \otimes S_{(3,3,3)}\CC^{3} \otimes S_{(3,3,3)} \CC^{3} .\]
\end{theorem}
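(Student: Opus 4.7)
The plan has three parts: show $\sigma_{4}(\PP^2\times\PP^2\times\PP^2)$ is a hypersurface, construct an explicit degree $9$ polynomial vanishing on it, and identify the Schur module in which this polynomial lives.

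First, the expected dimension is $\min(4\cdot 6+3,\,26)=26$, so a naive count predicts $\sigma_{4}$ fills the ambient space; Terracini's lemma is needed to detect the defect. At a generic point $v=\sum_{i=1}^4 a_i\otimes b_i\otimes c_i$ the affine tangent cone is $\sum_{i=1}^{4}\bigl(A\otimes b_i\otimes c_i+a_i\otimes B\otimes c_i+a_i\otimes b_i\otimes C\bigr)$, and a direct rank computation with four generic triples in $(\CC^{3})^{3}$ shows this sum has codimension exactly one in $\CC^{27}$. Since $\sigma_{4}$ is the closure of an irreducible parameter space, it is an irreducible hypersurface and its ideal is principal.

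Second, following Strassen's commutation technique, slice $T\in A\otimes B\otimes C$ along a basis of $C^{*}$ into a triple $(M_{1},M_{2},M_{3})$ of $3\times 3$ matrices. On the Zariski-open locus where $M_{1}$ is invertible, set $X:=M_{1}^{-1}M_{2}$ and $Y:=M_{1}^{-1}M_{3}$. If $T=\sum_{i=1}^{4}a_i\otimes b_i\otimes c_i$, then after acting by $GL(A)\times GL(B)$ to reduce $M_{1}$ to the identity, the matrices $X$ and $Y$ share three eigenvectors coming from three of the rank-one summands modulo a rank-one correction from the fourth, forcing the commutator $[X,Y]$ to have rank at most one. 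Rescaling by powers of $\det(M_{1})$ via $\adj(M_{1})=\det(M_{1})\,M_{1}^{-1}$ clears denominators, yielding a polynomial $F$ of total degree $9$ in the entries of $T$ that vanishes on a dense open subset of $\sigma_{4}$ and hence on all of $\sigma_{4}$ by closure.

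Third, the construction produces the same polynomial up to the determinant characters of the basis changes chosen in $A$, $B$ and $C$, so $F$ spans a one-dimensional $GL(A)\times GL(B)\times GL(C)$-subrepresentation of $S^{9}(A^{*}\otimes B^{*}\otimes C^{*})$. Tracking the torus weight shows it is $(\det_{A}^{3},\det_{B}^{3},\det_{C}^{3})$, identifying the line with $S_{(3,3,3)}A^{*}\otimes S_{(3,3,3)}B^{*}\otimes S_{(3,3,3)}C^{*}$; a short character/plethysm check confirms this one-dimensional module appears with multiplicity one in $S^{9}$. The main obstacle is verifying that $F$ is not identically zero and that $\{F=0\}=\sigma_{4}$ set-theoretically rather than strictly containing it, which requires exhibiting a single explicit rank-five tensor with $F\ne 0$; this then certifies that the principal ideal of the hypersurface is generated by $F$.
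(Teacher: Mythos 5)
Your outline reproduces the shape of Strassen's argument (which the paper itself does not reprove; it only cites \cite{Strassen83} and records the equation in Remark 3.4), but two steps as written do not hold up. First, the key claim in your second part is false: for a generic rank-$4$ tensor with invertible slice $M_{1}$, the commutator $W=[M_1^{-1}M_2,\,M_1^{-1}M_3]$ has rank exactly $2$, not at most $1$. Indeed, in Ottaviani's formulation $\operatorname{rank}(\psi_T)=2\operatorname{rank}(M_1)+\operatorname{rank}(M_2M_1^{-1}M_3-M_3M_1^{-1}M_2)$ when $M_1$ is invertible, and on $\sigma_4$ one has $\operatorname{rank}(\psi_T)\le 8$ with equality at generic points (were the rank generically $\le 7$, the degree-$8$ minors of $\psi_T$ would vanish on the degree-$9$ irreducible hypersurface, forcing them to be identically zero, which they are not). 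So the correct statement is only that $\det(W)=0$ on $\sigma_4$; that still yields the degree-$9$ polynomial after clearing $\det(M_1)$, but your justification via ``three shared eigenvectors modulo a rank-one correction'' proves neither the false rank-$\le 1$ bound nor the true rank-$\le 2$ bound: perturbing a commuting pair by the Sherman--Morrison corrections coming from the fourth summand only gives a commutator of rank up to $4$, which is vacuous for $3\times 3$ matrices. This bound is precisely the nontrivial content of Strassen's commutation argument and needs a genuine proof.

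Second, your closing certification is insufficient. Exhibiting one rank-five tensor with $F\neq 0$ only shows $F\not\equiv 0$; it does not show $\{F=0\}=\sigma_4$, nor that $F$ generates $\I(\sigma_4)$. Since $\sigma_4$ is an irreducible hypersurface, $\I(\sigma_4)=(G)$ with $G$ irreducible and $G\mid F$; a priori $F=G\cdot H$ with $\deg G<9$ (the weight argument only forces $3\mid\deg G$, leaving degrees $3$ and $6$ open), in which case $\{F=0\}$ strictly contains $\sigma_4$ and the ideal is generated in lower degree, contradicting the statement you want. To close this you must prove either that $F$ is irreducible (this is exactly what Strassen establishes, as the paper notes in Remark 3.4) or that $\deg\sigma_4=9$, neither of which follows from a single nonvanishing evaluation. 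The remaining ingredients of your plan (Terracini to get codimension one, the weight $(\det_A^3,\det_B^3,\det_C^3)$ identification, multiplicity one of $S_{(3,3,3)}\CC^3\otimes S_{(3,3,3)}\CC^3\otimes S_{(3,3,3)}\CC^3$ in degree $9$) are fine in principle, but without the two missing pieces above the proof does not go through.
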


Let $M_{9}$ denote the inherited module $S_{(3,3,3)} \CC^{3} \otimes S_{(3,3,3)}\CC^{3} \otimes S_{(3,3,3)} \CC^{4}$.
Inheritance implies that
$M_{9}\in \I(\sigma_{4}(\PP^{2}\times \PP^{2} \times \PP^{3}))$.

\begin{remark}\label{rmk:ottaviani}Suppose $[T]\in \PP(A\otimes B\otimes C)$, with $\dim(A) = 3$.  Then write $T = a_{1}\otimes T_{1} + a_{2}\otimes T_{2}  + a_{3}\otimes  T_{3} $, where the $T_{i}$ are $b\times c$ matrices in $B\otimes C$ and the $a_{i}$ are a basis of $A$.

Strassen described his equation in degree $9$ as follows. On an open set one may assume that $T_{1}$ is invertible. Then consider the polynomial
\[\det(T_{1})^{2}\det(T_{2}T_{1}^{-1}T_{3} - T_{3} T_{1}^{-1}T_{2}).\]
Strassen showed that this polynomial is irreducible, of degree $9$, and vanishes on $\sigma_{4}(\PP A \times \PP B \times \PP C)$. 

A useful reformulation by Ottaviani of Strassen's equation is the following (see \cite{LanOtt, Ott1}). As before write $T = a_{1}\otimes T_{1} + a_{2}\otimes T_{2}  + a_{3}\otimes  T_{3} $.  Here one  does not require any of the slices $T_{1},T_{2},T_{3}$ to be invertible.
Construct the block matrix
\begin{equation}\label{block}
\psi_{T} = \left(\begin{array}{ccc}
0 & T_{3} & -T_{2} \\
-T_{3} & 0 & T_{1} \\
T_{2} & -T_{1} & 0
\end{array}
\right).\end{equation}
One checks that $\psi_{T}$ is linear in $T$, and that if $[T]\in Seg(\PP A \times \PP B \times \PP C)$ then $Rank(\psi_{T})=2$.  Therefore if $[T]$ is a general point in $\sigma_{k}(\PP A \times \PP B \times \PP C)$ it can be written as the sum of $k$ points on $Seg(\PP A \times \PP B \times \PP C)$ so $Rank(\psi_{T})\leq 2k$ by the sub-additivity of matrix rank.  In particular, in the case $\dim(A) = \dim(B) = \dim(C) = 3$, the $9\times 9$ determinant  $\det(\psi_{T})$ gives a non-trivial equation for $\sigma_{4}(\PP A \times \PP B \times \PP C)$, which is also Strassen's equation. This polynomial has $9,216$ monomials.  Note that $\psi_{T}$ is not a skew-symmetric matrix unless the matrices $T_{i}$ are symmetric, otherwise any odd-sized determinant would vanish identically.
\end{remark}

\begin{remark}\label{M9 basis}
In the case that $a=b=3$ and $c=4$, as a vector space, $M_{9}$ is isomorphic to $S^{3}\CC^{4}$ so $\dim(M_{9}) = 20$. 
When the highest weight vector of a module has a determinantal representation (as in the case of $M_{9}$), it is typically much faster to compute a basis of the module from the highest weight vector using lowering operators.  (Lowering operators are standard in the theory of Lie algebras, but are not the focus of this work.  We refer the interested reader to \cite[Section 3.4]{oeding_pm_paper} for an explicit treatment of this method.)
Using this method, we found that the natural basis of $M_{9}$ consists of polynomials with $9,216$ or $25,488$ or $43,668$ monomials. 
 This basis is a 23Mb text file of polynomials, too large to include with our ancillary files due to the restrictions of the arXiv, but may be obtained from either author.  
  As in Remarks \ref{M6 basis} and \ref{M5 basis}, these polynomials can be associated to representative polynomials, depending on fillings.  In the $A$ and $B$-factors, the diagram for $(3,3,3)$ can only have one semi-standard filling, namely $\young(111,222,333)$.  In the $C$-factor, there are three classes of fillings, namely $\young(111,222,333)$, $\young(111,222,334)$ and $\young(111,223,344)$.  These fillings yield the representative polynomials consisting of $9,216$ or $25,488$ or $43,668$ monomials respectively.  The rest of the polynomials in a basis of $M_{9}$ can be constructed by the substitutions and swaps described in our treatment of $M_{6}$ (see the discussion above Remark \ref{M6 basis}).
\end{remark}
Alternately, a basis of $M_{9}$ can be constructed via Ottaviani's formulation.  They are derived from the condition that the now $9\times 12$ matrix appearing in \eqref{block} have rank $8$ or less. However, the space of $9\times 9$ minors of $\psi_{T}$ is no longer irreducible when $a=b=3$ and $c=4$.  Namely the space of $9\times 9$ minors of the $9\times 12$ matrix $\psi_{T}$ is the following representation
\begin{gather*}
S_{3,3,3}A^{*} \otimes S_{3,3,3}B^{*} \otimes S_{3,3,3}C^{*} \\
\oplus 
S_{4,3,2}A^{*} \otimes S_{3,3,3}B^{*} \otimes S_{3,3,2,1}C^{*} \\
\oplus
S_{5,2,2}A^{*} \otimes S_{3,3,3}B^{*} \otimes S_{3,2,2,2}C^{*} 
.\end{gather*}

There are three equivalence classes of maximal minors of $\psi_{T}$ depending only on the column index $I$ of the maximal minor of $\Delta_{I}(\psi_{T})$.  Let $P = (P_{1},P_{2},P_{3})$ be the partition of the set $\{1,\dots,12\}$ into three sets $P_{1}= \{1,2,3,4\}$, $ P_{2} =\{5,6,7,8\}$, $P_{3}=\{9,10,11,12\}$.
The representation 
$S_{3,3,3}A^{*} \otimes S_{3,3,3}B^{*} \otimes S_{3,3,3}C^{*} $ is associated to the minors $\Delta_{I}(\psi_{T})$ such that $|I\cap P_{i}| = 3$ for $i=1,2,3$.  This condition precisely forces the minor of $\psi_{T}$ to be constructed with $3\times 3$ submatrices of $T_{1},T_{2}$ and $T_{3}$.
The representation 
$S_{4,3,2}A^{*} \otimes S_{3,3,3}B^{*} \otimes S_{3,3,2,1}C^{*} $ is associated to the minors $\Delta_{I}(\psi_{T})$ such that $|I\cap P_{1}| = 4$, $|I\cap P_{2}| = 3$, $|I\cap P_{3}| = 2$.
The representation 
$S_{5,2,2}A^{*} \otimes S_{3,3,3}B^{*} \otimes S_{3,2,2,2}C^{*} $ is associated to the minors $\Delta_{I}(\psi_{T})$ such that $|I\cap P_{1}| = 4$, $|I\cap P_{2}| = 4$, $|I\cap P_{3}| = 1$.

Note that the symmetry implied by the fact that $A$ and $B$ have the same dimension allows us to reverse the roles of $A$ and $B$ to find two more modules in the ideal, namely the two modules
$ S_{3,3,3}A^{*} \otimes  S_{4,3,2}B^{*} \otimes S_{3,3,2,1}C^{*} $ and 
$ S_{3,3,3}A^{*} \otimes  S_{5,2,2}B^{*} \otimes S_{3,2,2,2}C^{*} $ must also vanish on $\sigma_{4}(\PP A \times \PP B \times \PP C)$.

While we have described five modules of degree $9$ equations which vanish on $\sigma_{4}(\PP A \times \PP B \times \PP C)$, 
we only use the module
$ M_{9} = S_{3,3,3}A^{*} \otimes S_{3,3,3}B^{*} \otimes S_{3,3,3}C^{*} $ along with $M_{6}$ described above for our set-theoretic defining equations. 
We can conclude that $\langle M_{9}\rangle \not \subset \langle M_{6} \rangle$ by analyzing the shapes of the partitions involved.  More specifically, in the $C$-factor the partition $(3,3,3)$ only has $3$ parts, but if $S_{\pi_{1}}A^{*}\otimes S_{\pi_{2}}B^{*}\otimes S_{\pi_{3}}C^{*}$ is a module in the ideal generated by $M_{6}$ then $\pi_{3}$ must have at least $4$ parts. However this argument fails for the other four degree $9$ modules so it is possible that these equations are in the ideal generated by $M_{6}$. Moreover our set-theoretic result implies that it must be the case that the other degree $9$ modules are in the ideal generated by $M_{6}$ (up to high numerical accuracy).
\begin{example}[\cite{Friedland2010}]\label{ex:deg 16}
Friedland has shown that the known equations in degree $9$ are not sufficient to define $\sigma_{4}(\PP A \times \PP B \times \PP C)$ set-theoretically when $\dim(A)\geq 3$, $\dim(B)\geq 3$ and $\dim(C)\geq 4$. We thank J.M. Landsberg for the following clarification of Friedland's example.
Consider the point
\[
P=(a_{1}\otimes b_{1}+a_{2}\otimes b_{2})\otimes c_{1}+(a_{1}\otimes b_{1}+a_{2}\otimes b_{3})\otimes c_{2}+(a_{1}\otimes b_{1}+a_{3}\otimes b_{2})\otimes c_{3}+(a_{1}\otimes b_{1}+a_{3}\otimes b_{3})\otimes c_{4}
.\]
The span of $\{a_{1},a_{2},a_{3}\}\subset A$ and the span of $\{b_{1},b_{2},b_{3}\}\subset B$ are both no more than 3-dimensional, so $P$ is a zero of $M_{5}$ since the representations $S_{\pi_{1}}A^{*}\otimes S_{\pi_{2}}B^{*}\otimes S_{\pi_{3}}C^{*}$ in $M_{5}$ each have either $|\pi_{1}|=4$ or $|\pi_{2}|=4$, and therefore the respective Schur functor $S_{\pi_{i}}$ with $|\pi_{i}|=4$ will annihilate a $3$-dimensional subspace.
One finds that $\psi_{T}(P)$ has rank $8$ and therefore $P$ is a zero of $M_{9}$. However $P$ is not a point of $\sigma_{4}(\PP A \times \PP B \times \PP C)$.  This geometric argument implies that more polynomials are needed than just the degree $5$ and $9$ equations. For this, Friedland produces equations of degree $16$ which do not vanish on $P$. 
On the other hand, $P$ is not in the zero set of $M_{6}$, so $M_{6}$ is sufficient to rule out the possibility of points of the same form as $P$ to have border rank $4$.
Therefore, one could repeat Friedland's proof, modifying the argument where he uses degree 16 equations with these degree $6$ equations and thus obtain a new result, and a computer-free proof of Theorem \ref{thm:main}.
\end{example}

\begin{remark} 
To construct a basis of the $8000$ dimensional space $S_{(3,3,3)} \CC^{4} \otimes S_{(3,3,3)}\CC^{4} \otimes S_{(3,3,3)} \CC^{4}$, one can repeat the lowering operator procedure.
Since these polynomials are very complicated, our experience is that, in practice, one should use the degree $9$ equations in their determinantal form.  In particular, to check if a point $z$ vanishes on all of the polynomials in $S_{(3,3,3)} \CC^{4} \otimes S_{(3,3,3)}\CC^{4} \otimes S_{(3,3,3)} \CC^{4}$, it is more efficient to first construct the matrix in \eqref{block} for the point $z$ and check that the determinant vanishes.  Then repeat this test for all allowable changes of coordinates, in other words, for every  $g \in GL(4)\times GL(4) \times GL(4)$ construct the matrix in \eqref{block} for $g.z$ and check that the determinant still vanishes.  (This is sufficient because our module is the span of the orbit of a single polynomial.) Moreover, if one only wants a quick check that $z$ is in the zero-set with high probability, it suffices to check that $g.z$ is in the zero-set for a random $g$.  In this quick test, a non-vanishing result is certain, but vanishing must be re-verified with an exact (non-randomized) test.
\end{remark}

Since $(2,2,2)$ has $3$ parts, and $(3,1,1,1)$ has $4$ parts, $M_{6}$ must vanish on the subspace varieties $\Sub_{2,3,4} \cup  \Sub_{3,2,4} \cup  \Sub_{3,3,3}$.
Also, note that two of these subspace varieties are already contained in the secant variety, namely $\sigma_{4}\left( \PP^{2} \times \PP^{2} \times \PP^{3} \right) \supset \Sub_{2,3,4} \cup  \Sub_{3,2,4}$. Indeed, if $x \in \Sub_{2,3,4}$, there exists $A'\subset A$ such that $\dim(A')=2$ and  $x \in \PP(A'\otimes B \otimes C)$.  But in this case $\PP(A'\otimes B \otimes C) = \sigma_{4}(\PP A'\times \PP B \times \PP C) \subset \sigma_{4}(\PP A\times \PP B \times \PP C)$.  The same argument is repeated for $\Sub_{3,2,4}$.

If $M$ is a set of polynomials, let $\V(M)$ denote the zero-set of $M$. Based on the above evidence, we make the conjecture
\begin{conj}\label{conj:m6} As sets,
\[\V(S_{(2,2,2)}\CC^{3} \otimes S_{(2,2,2)} \CC^{3} \otimes S_{(3,1,1,1)} \CC^{4})= \sigma_{4}\left( \PP^{2} \times \PP^{2} \times \PP^{3} \right) \cup \Sub_{3,3,3}.\]
\end{conj}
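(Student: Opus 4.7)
The inclusion $\sigma_{4}(\PP^{2}\times\PP^{2}\times\PP^{3})\cup\Sub_{3,3,3}\subseteq\V(M_{6})$, where $M_{6}=S_{(2,2,2)}\CC^{3}\otimes S_{(2,2,2)}\CC^{3}\otimes S_{(3,1,1,1)}\CC^{4}$, follows from what is already in the paper. Proposition~\ref{prop:deg6ideal} gives $M_{6}\subseteq\I_{6}(\sigma_{4}(\PP^{2}\times\PP^{2}\times\PP^{3}))$, and since the Schur functor $S_{(3,1,1,1)}$ annihilates any subspace of $\CC^{4}$ of dimension at most $3$, the module $M_{6}$ vanishes on $\Sub_{3,3,3}$ as well (this is the easy test recalled before the conjecture). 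The real content of the statement is the reverse inclusion $\V(M_{6})\subseteq\sigma_{4}(\PP^{2}\times\PP^{2}\times\PP^{3})\cup\Sub_{3,3,3}$, which the plan is to establish using numerical algebraic geometry.

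The strategy is to compute a numerical irreducible decomposition of $\V(M_{6})\subseteq\PP^{35}$ using the homotopy-continuation package Bertini, and then verify component by component that the decomposition agrees with the expected answer. The two expected irreducible components are $\sigma_{4}(\PP^{2}\times\PP^{2}\times\PP^{3})$ of dimension $31$ (codimension $4$) and $\Sub_{3,3,3}$ of dimension $26$ (codimension $9$); because these have different dimensions, one must use a cascade-type homotopy that examines components at every possible codimension rather than only the top-dimensional part. The steps would be: first, compute witness point supersets for $\V(M_{6})$ at codimensions $4,5,\ldots,9$ by slicing with generic linear subspaces of complementary codimension; second, use monodromy loops and the trace test to group the witness points into irreducible components, discarding junk points that sit on higher-dimensional components; third, for each irreducible component so produced, test its witness points for membership in one of the two expected varieties. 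Membership in $\Sub_{3,3,3}$ is detected by checking numerically that the $3\times 12$ flattening in the $C$-direction has rank at most $3$, while membership in $\sigma_{4}(\PP^{2}\times\PP^{2}\times\PP^{3})$ can be verified by checking Strassen's degree $5$ and degree $9$ equations together with the explicit computation of an approximate rank-$4$ decomposition.

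The principal obstacle is the scale of the computation. The module $M_{6}$ has dimension $10$ in the format $3\times 3\times 4$, so one has $10$ polynomials of degree $6$ in $36$ homogeneous variables; a naive total-degree homotopy would require tracking roughly $6^{10}\approx 6\times 10^{7}$ paths, which is well beyond what is practical. To make the calculation feasible, the plan is to replace the total-degree start system with a regeneration or multihomogeneous start system adapted to the combinatorial structure of the Schur module, to parallelize aggressively across many processors, and where possible to quotient the path count by the $(GL(3)\times GL(3)\times GL(4))\rtimes\mathfrak{S}_{2}$ symmetry of $\V(M_{6})$. Because the final path tracking is carried out in floating-point arithmetic and because certification of completeness of the decomposition via monodromy is itself probabilistic, the proof produced in this way is only valid up to high numerical accuracy, which is the caveat carried by the main theorem of the paper.
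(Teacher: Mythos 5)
Your overall strategy is exactly the one the paper follows: the forward inclusion is symbolic (Proposition~\ref{prop:deg6ideal} plus the fact that $S_{(3,1,1,1)}$ kills any $3$-dimensional subspace of $\CC^{4}$), and the reverse inclusion is only verified numerically, by computing a numerical irreducible decomposition of $\V(M_{6})$ with Bertini (regeneration, parallel, adaptive precision) and identifying the components — which is why the statement remains a conjecture, confirmed ``up to high numerical accuracy'' by Computation~\ref{comp:Bertini}. However, two concrete points in your plan would fail as written. First, your dimension count for $\Sub_{3,3,3}$ is wrong: it is $29$-dimensional (codimension $6$ in $\PP^{35}$), not $26$-dimensional. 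You counted only $\dim\PP(\CC^{3}\otimes\CC^{3}\otimes\CC^{3})=26$ and forgot the $3$-parameter choice of the subspace $C'\subset\CC^{4}$ in the Grassmannian $G(3,4)$. The Bertini run indeed returns components of dimensions $31$ (degree $345$) and $29$ (degree $84$), so your ``verify against the expected answer'' step, calibrated to dimension $26$, would report a mismatch. Second, restricting the witness-superset computation to codimensions $4$ through $9$ presupposes the conclusion: to prove $\V(M_{6})\subseteq\sigma_{4}\cup\Sub_{3,3,3}$ you must rule out components in \emph{every} possible codimension (here $1$ through $10$, since $M_{6}$ has a $10$-element basis), because an overlooked component of codimension $1$, $2$, $3$ or $10$ would destroy the set-theoretic equality. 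The paper's computation performs the full sweep through all codimensions for exactly this reason.

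A smaller point: your membership test for $\sigma_{4}(\PP^{2}\times\PP^{2}\times\PP^{3})$ via ``Strassen's degree $5$ equations'' is vacuous in this format, since $\I_{5}(\sigma_{4}(\PP^{2}\times\PP^{2}\times\PP^{3}))=0$ (the modules in $M_{5}$ involve $S_{(2,1,1,1)}$ of a $3$-dimensional space), and testing the degree $6$ and $9$ equations is circular, since defining equations are precisely what is at issue. The paper avoids membership testing altogether: it identifies the two numerically found components by pigeonhole, using the known inclusions $\sigma_{4}\cup\Sub_{3,3,3}\subseteq\V(M_{6})$, the irreducibility of both varieties, non-defectivity of $\sigma_{4}$ (dimension $31$, by Abo--Ottaviani--Peterson), and $\dim\Sub_{3,3,3}=29$; the degrees $345$ and $84$ are only corroborating data. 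With the dimension corrected and the full cascade performed, your argument coincides with the paper's.
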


Computation~\ref{comp:Bertini} below verifies that Conjecture \ref{conj:m6} is true up to high numerical accuracy.

\begin{theorem}[Corollary to Computation~\ref{comp:Bertini}]\label{cor:main}
Let $A \cong \CC^{3}$, $B \cong \CC^{3}$, $C \cong \CC^{4}$. Up to high numerical accuracy,
the secant variety $\sigma_{4}\left( \PP A \times \PP B \times \PP C \right)$ is defined set-theoretically by
\begin{eqnarray*}
M_{6} =  S_{(2,2,2)} A^{*} \otimes S_{(2,2,2)} B^{*} \otimes S_{(3,1,1,1)} C^{*}\\
M_{9} = S_{(3,3,3)} A^{*} \otimes S_{(3,3,3)} B^{*} \otimes S_{(3,3,3)} C^{*} 
.\end{eqnarray*}
\end{theorem}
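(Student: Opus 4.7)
The plan is to show both inclusions in the equality $\V(M_{6}\cup M_{9}) = \sigma_{4}(\PP A \times \PP B \times \PP C)$ where $a=b=3$, $c=4$. One direction is essentially bookkeeping with results already in the excerpt: the containment $\sigma_{4}(\PP A \times \PP B \times \PP C) \subseteq \V(M_{6}\cup M_{9})$ is immediate from Proposition~\ref{prop:deg6ideal} (which places $M_{6}$ in the ideal) together with Theorem~\ref{thm:Strassen} plus the inheritance principle of Landsberg--Manivel (which places the inherited module $M_{9}$ in the ideal of $\sigma_{4}(\PP^{2}\times \PP^{2}\times \PP^{3})$).

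For the reverse inclusion, I would invoke Conjecture~\ref{conj:m6}, which Computation~\ref{comp:Bertini} verifies up to high numerical accuracy:
\[
\V(M_{6}) = \sigma_{4}(\PP^{2}\times \PP^{2}\times \PP^{3}) \cup \Sub_{3,3,3}.
\]
Let $x\in \V(M_{6}\cup M_{9})$. If $x\in \sigma_{4}(\PP^{2}\times \PP^{2}\times \PP^{3})$ we are done, so assume instead that $x\in \Sub_{3,3,3}$. Because $a=b=3$, the only flexibility is in the $C$-factor: there is a $3$-dimensional subspace $C'\subset C$ so that $x\in \PP(A\otimes B\otimes C')$.

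The next step is the key geometric reduction. Restricting polynomials in $M_{9} = S_{(3,3,3)}A^{*}\otimes S_{(3,3,3)}B^{*}\otimes S_{(3,3,3)}C^{*}$ to $A\otimes B\otimes C'$ amounts to replacing the $C^{*}$-factor by $(C')^{*}\cong \CC^{3*}$. The resulting Schur module $S_{(3,3,3)}(C')^{*}$ is one-dimensional, so this restriction is precisely a nonzero scalar multiple of Strassen's degree $9$ generator. Thus $x\in \V(M_{9})\cap \PP(A\otimes B\otimes C')$ implies, by Theorem~\ref{thm:Strassen}, that $x\in \sigma_{4}(\PP A\times \PP B\times \PP C')\subseteq \sigma_{4}(\PP^{2}\times \PP^{2}\times \PP^{3})$. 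Combining the two cases gives $\V(M_{6}\cup M_{9})\subseteq \sigma_{4}(\PP^{2}\times \PP^{2}\times \PP^{3})$, completing the set-theoretic description.

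The only genuinely difficult step is Computation~\ref{comp:Bertini}, which supplies Conjecture~\ref{conj:m6}; everything else in the corollary is a compact geometric argument that combines inheritance with Strassen's original result. I would also take care to note, as a sanity check, that the two other components one might worry about, $\Sub_{2,3,4}$ and $\Sub_{3,2,4}$, are already contained in $\sigma_{4}(\PP^{2}\times \PP^{2}\times \PP^{3})$ (as observed just before Conjecture~\ref{conj:m6}), so the conjecture really does isolate $\Sub_{3,3,3}$ as the only extra component of $\V(M_{6})$ that has to be cut away, and the restriction-to-$C'$ argument above does exactly that.
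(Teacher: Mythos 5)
Your proposal is correct and follows essentially the same route as the paper's own proof: the forward inclusion from Proposition~\ref{prop:deg6ideal} plus Theorem~\ref{thm:Strassen} with inheritance, and the reverse inclusion by splitting $\V(M_{6})$ via Conjecture~\ref{conj:m6} (Computation~\ref{comp:Bertini}) and, on the $\Sub_{3,3,3}$ branch, restricting $M_{9}$ to $A\otimes B\otimes C'$ to invoke Strassen's theorem for $\sigma_{4}(\PP^{2}\times\PP^{2}\times\PP^{2})$. Your extra observation that the restricted module $S_{(3,3,3)}A^{*}\otimes S_{(3,3,3)}B^{*}\otimes S_{(3,3,3)}(C')^{*}$ is spanned by Strassen's single degree-$9$ equation is a nice clarification of a step the paper states without comment, but it is not a different argument.
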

\begin{proof}
By Proposition~\ref{prop:deg6ideal} and by Strassen's Theorem~\ref{thm:Strassen} combined with inheritance we know that both $M_{6}$ and $M_{9}$ are in the ideal of $\sigma_{4}\left( \PP A^{*} \times \PP B^{*} \times \PP C^{*} \right)$. 
So we know that $\sigma_{4}\left( \PP A \times \PP B \times \PP C \right) \subset \V(M_{6}\oplus M_{9})$.

For the other inclusion, select a point $z$ in the common zero locus of $M_{6}$ and $M_{9}$. Since $z\in \V(M_{6})$, Conjecture \ref{conj:m6} says that either $z$ is on the secant variety, in which case we are done, or $z$ is on the subspace variety.  In the latter case, let $C' \subset C$ be a $3$-dimensional vector space so that $z\in \PP (A\otimes B\otimes C')$. Then $z$ is a zero of $M_{9} = S_{(3,3,3)} A^{*} \otimes S_{(3,3,3)} B^{*} \otimes S_{(3,3,3)} C^{*}$, and therefore is also a zero of the polynomials in the restriction $S_{(3,3,3)} A^{*} \otimes S_{(3,3,3)}B^{*} \otimes S_{(3,3,3)} C'^{*}$.  So by Strassen's Theorem \ref{thm:Strassen}, 
\[z \in \sigma_{4}(\PP A\times \PP B \times \PP C') \cong  \sigma_{4}(\PP^{2}\times \PP^{2} \times \PP ^{2})  .\]
We conclude because we have the obvious inclusion 
\[
\sigma_{4}(\PP A\times \PP B \times \PP C') \subset \sigma_{4}(\PP A\times \PP B \times \PP C)
.\]
\end{proof}
We used numerical algebraic geometry, specifically Bertini, to compute the decomposition of the zero set $\V(M_{6})$ into irreducible varieties.  We outline this computation in the next section. However, if one were to prove Conjecture \ref{conj:m6}, then the qualifier ``with high numerical accuracy'' may be removed from the statement of Theorem \ref{cor:main}.

Recall that the Landsberg-Manivel-Friedland Theorem \ref{LMF} above said that set-theoretic defining equations of $\sigma_{4}\left( \PP^{ a-1} \times \PP^{b-1} \times \PP^{c-1}\right)$ with  $a,b,c \geq 3$ will be known as soon as set-theoretic defining equations of $\sigma_{4}\left( \PP^2 \times \PP^2 \times \PP^3\right)$ are known,  and this is the content of Theorem~\ref{cor:main}.  Therefore we can restate the immediate consequence of combining Theorem \ref{LMF} with our computations:
\begin{theorem}\label{thm:main} As sets, for $a,b,c \geq 3$, up to high numerical accuracy,  $\sigma_{4}\left( \PP^{ a-1} \times \PP^{b-1} \times \PP^{c-1}\right)$,  is the zero-set of:
\begin{enumerate}
\item  Strassen's commutation conditions,
\begin{align*}M_{5} :=
        S_{(3,1,1)}A^{*} \otimes S_{(2,1,1,1)} B^{*} \otimes S_{(2,1,1,1)} C^{*}  \\
 \oplus S_{(2,1,1,1)}A^{*} \otimes S_{(3,1,1)} B^{*} \otimes S_{(2,1,1,1)} C^{*} \\
 \oplus S_{(2,1,1,1)}A^{*} \otimes S_{(2,1,1,1)} B^{*} \otimes S_{(3,1,1)} C^{*},
 \end{align*}
\item equations inherited from $\sigma_{4}\left( \PP^{2} \times \PP^{2} \times \PP^{3} \right)$,
\begin{align*}
M_{6} =  S_{(2,2,2)} A^{*} \otimes S_{(2,2,2)} B^{*} \otimes S_{(3,1,1,1)} C^{*}\\
M_{9} = S_{(3,3,3)} A^{*} \otimes S_{(3,3,3)} B^{*} \otimes S_{(3,3,3)} C^{*}, 
\end{align*}

\item and modules in $S^{5}(A^{*}\otimes B^{*}\otimes C^{*})$ containing a $\textstyle{\bigwedge^{5}}$, \emph{i.e.} equations for $\Sub_{4,4,4}$.
\end{enumerate}
\end{theorem}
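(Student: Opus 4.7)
The plan is to deduce Theorem~\ref{thm:main} by combining Theorem~\ref{LMF} with Theorem~\ref{cor:main}. Theorem~\ref{LMF} already decomposes the problem into exactly three pieces: Strassen's module $M_5$, equations inherited from $\sigma_4(\PP^2\times\PP^2\times\PP^3)$, and equations for $\Sub_{4,4,4}$. Items (1) and (3) match verbatim between Theorem~\ref{LMF} and Theorem~\ref{thm:main}, so the only task is to spell out an explicit set of generators for piece (2).

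First I would apply Theorem~\ref{cor:main}, which identifies (up to high numerical accuracy) a specific set-theoretic defining set for $\sigma_4(\PP^2\times\PP^2\times\PP^3)$ consisting of the two modules $S_{(2,2,2)}\CC^3 \otimes S_{(2,2,2)}\CC^3 \otimes S_{(3,1,1,1)}\CC^4$ and $S_{(3,3,3)}\CC^3 \otimes S_{(3,3,3)}\CC^3 \otimes S_{(3,3,3)}\CC^4$. Next I would invoke the inheritance principle of \cite[Proposition~4.4]{LM04}: for subspaces $A' \subset A$, $B' \subset B$, $C' \subset C$ of dimensions $3,3,4$, the $GL(A)\times GL(B)\times GL(C)$-orbit (in the bigger polynomial ring) of any polynomial in $S_{\pi_1}(A')^{*} \otimes S_{\pi_2}(B')^{*} \otimes S_{\pi_3}(C')^{*}$ spans the Schur module $S_{\pi_1} A^{*} \otimes S_{\pi_2} B^{*} \otimes S_{\pi_3} C^{*}$ with the same partition shapes. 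Applied to the two small modules above, this produces precisely the big modules $M_6 = S_{(2,2,2)} A^{*} \otimes S_{(2,2,2)} B^{*} \otimes S_{(3,1,1,1)} C^{*}$ and $M_9 = S_{(3,3,3)} A^{*} \otimes S_{(3,3,3)} B^{*} \otimes S_{(3,3,3)} C^{*}$ stated in Theorem~\ref{thm:main}, and guarantees that they lie in the ideal of $\sigma_4(\PP A \times \PP B \times \PP C)$.

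Having identified the inherited piece with $M_6 \oplus M_9$, the final step is simply to substitute this description into Theorem~\ref{LMF}. Combining items (1), (2), and (3) then yields the set-theoretic equality asserted in Theorem~\ref{thm:main}; the qualifier ``up to high numerical accuracy'' carries over from Theorem~\ref{cor:main}.

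The main subtle point, rather than a true obstacle, is that inheritance is a statement about Schur modules, not about individual polynomials: one must verify that the big-ambient $M_6$ and $M_9$ constitute an \emph{adequate} inherited generating set in the sense of Theorem~\ref{LMF}(2). This is automatic from the standard fact that restriction of a $GL(A)\times GL(B)\times GL(C)$-equivariant polynomial in $S_{\pi_1}A^{*}\otimes S_{\pi_2}B^{*}\otimes S_{\pi_3}C^{*}$ to tensors supported in $A'\otimes B'\otimes C'$ recovers the small Schur module, so any zero of the big $M_6\oplus M_9$ that happens to lie in $A'\otimes B'\otimes C'$ is automatically a zero of the small $M_6\oplus M_9$. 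Once this bookkeeping is dispatched, the proof reduces to a direct citation of the two prior theorems with no further computation.
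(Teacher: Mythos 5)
Your proposal is correct and follows the paper's own route: Theorem~\ref{thm:main} is obtained exactly by combining Theorem~\ref{LMF} with Theorem~\ref{cor:main}, using inheritance to identify item (2) with $M_{6}$ and $M_{9}$ in the larger spaces. Your extra care about why the inherited modules suffice as the ``equations inherited from $\sigma_{4}(\PP^{2}\times\PP^{2}\times\PP^{3})$'' is a reasonable elaboration of the same argument, not a departure from it.
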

\begin{remark}
The qualifier ``up to high numerical accuracy'' can be removed if one uses Friedland's argument \cite{Friedland2010} modified by our computations, as mentioned in Example \ref{ex:deg 16}.
\end{remark}

\section{Results using numerical algebraic geometry}\label{sec:Bertini}

In this section, we provide a brief overview of the basic methods of numerical algebraic geometry; references for further details are provided.  We then describe the results of the run establishing the main result of this article and conclude with a short discussion regarding the reliability of numerical algebraic geometry methods and, more to the point, the reliability of this result.

\subsection{Brief overview of numerical algebraic geometry methods}

Given generators of an ideal of $\mathbb C[x_1,\ldots,x_N]$, the methods of numerical algebraic geometry will produce 
a {\em numerical irreducible decomposition} for the associated variety $X\subset\mathbb C^N$.  In particular, for each irreducible component $Z$ of $X$, 
these methods will produce $\deg Z$ numerical approximations (to any number of digits) of generic points on $Z$.  The end result is 
a catalog of all irreducible components of $X$, each indicated by a set of {\em witness points} on the component (together referred to as a {\em witness 
set} for the component), its dimension, and its degree.

The core method of numerical algebraic geometry is {\em homotopy continuation}, a method for approximating the complex zero-dimensional solution set of a 
polynomial system.  The basic idea of homotopy continuation is to cast the given polynomial system $F$ 
as a member of a parameterized family of polynomial systems, one of which, $G$, has known solutions or is otherwise easily solved.  If done correctly, the solutions of $G$ will 
vary continuously to those of $F$ as the parameters are varied appropriately.  By tracking these paths numerically (using predictor-corrector methods), one will 
arrive at numerical approximations of all complex zero-dimensional solutions of $F$.  There have been many technical advances in this area that contribute heavily 
to the reliability of these methods.  See~\cite{SW05,Li} for general references and \cite{AMP,AMP2} regarding the use of adaptive precision methods for added reliability.

Pairing homotopy continuation with the use of hyperplane sections, monodromy, and a few other methods described fully in~\cite{SW05} yields the numerical irreducible decomposition.  Briefly, 
a $d$-dimensional irreducible algebraic set in $\mathbb C^N$ will intersect a generic codimension $d$ linear space in a set of points.  This statement about genericity 
(along with similar assumptions of genericity throughout numerical algebraic geometry) is the reason for referring to these methods as {\em  probability-one} methods, as 
described further below.  

The computation of a numerical irreducible decomposition begins by searching for codimension one irreducible components (by adding $N-1$ linear polynomials to the set of generators and solving for zero-dimensional components via homotopy continuation), followed by codimension two components, etc.  Once this sweep through all possible dimensions has been completed, 
we have a superset of the desired numerical irreducible decomposition, since a linear variety of codimension $d$ will intersect any component of dimension $d$ {\em 
or higher}.  Sommese, Verschelde, and Wampler (and others) have developed methods for removing points in the ``wrong dimension,'' i.e., those discovered while searching for 
components in dimension $d$ which actually lie on higher-dimensional components, called {\em junk points}.  They have also developed algorithms for performing pure-dimensional decompositions to yield witness sets on each irreducible component (instead of the initially-found witness sets for the union of all equidimensional irreducible components).  See~\cite{SW05} for further details.

There are three main software packages in this field: Bertini~\cite{Bertini}, HOM4PS-2.0~\cite{HOM4PS}, and PHCpack~\cite{PHC}.  Each package has various benefits over the 
others~\cite{BertComp}.  Since Bertini is typically the most efficient package for large, parallel, positive-dimensional problems as well as the package with the most reliability and precision features, we used Bertini in our computations for this article.  In fairness, it should also be noted that Bates is a Bertini developer.

\subsection{Numerical results for the Salmon Problem}

\begin{computation}\label{comp:Bertini}
Up to $10$ digits of accuracy, 
the zero-set of the $10$ polynomials in a basis of $M_{6}$ (defined above) has precisely two irreducible components.  One, in dimension 31, has degree 345.  The other, in dimension 29, has degree 84.
\end{computation}

Indeed,
$ \sigma_{4}\left( \PP^{2} \times \PP^{2} \times \PP^{3} \right) $ is non-defective and has dimension $31$ \cite[Theorem 4.6]{AOP}.
It is also straightforward to check that $ \Sub_{3,3,3}$ has dimension $29$, and by the pigeon-hole principle, these must be our components in the zero-set of $M_{6}$. Though these dimensions are sufficient information to identify our varieties, as additional information, we find that this secant variety has degree $345$ and the subspace variety has degree $84$.

\begin{proof}
The conclusion comes from the results of a calculation on Bertini \cite{Bertini} using approximately 2 weeks of computing time on 8 processors, using tight controls including small tracking and final tolerances ($10^{-10}$ or smaller), adaptive precision numerical methods, and a variety of checks and error controls built into Bertini (such as checking at $t=0.1$ that no paths have crossed). The output of our computation is included in the files ``main\_data.txt'' and ``screen\_out.txt'', which may be obtained with the other ancillary materials as mentioned above.
\end{proof}
\begin{remark}
After the submission of the first draft of this paper, Friedland provided a counterexample to \cite[Proposition~5.4]{LM08}, thus invalidating the proof of \cite[Corollary~5.6]{LM08}.  Because we quote this result in the present paper, we tried to use numerical methods to find out what could be true. In particular, we tried to find a corrected statement for \cite[Proposition~5.4]{LM08} by computing the zero-set of $S_{(3,1,1)}A^{*} \otimes S_{(2,1,1,1)} B^{*} \otimes S_{(2,1,1,1)} C^{*}$  using Bertini. This proved to be a very expensive computation.  This module, in its smallest form, has a basis of $96$ degree $5$ polynomials in $48$ variables (see the file ``deg\_5\_salmon.txt'' in our ancillary files).  After one month of computational time on 72 processors, we had only completed the first 10 codimensions, tracking up to 2 million paths for each codimension.  An easy geometric argument implies that the smallest possible component has projective dimension 8, indicating that we were very far from completing the computation. In the mean time, a second version of \cite{Friedland2010} appeared, with a corrected proof for \cite[Corollary~5.6]{LM08}.  Due to limited computational resources and time, we decided to abandon further computation and accept the computer-free proof of \cite[Corollary~5.6]{LM08} in \cite{Friedland2010}.
\end{remark}

\subsection{Reliability of this result}

Can Computation \ref{comp:Bertini} be accepted as absolute proof?  No, unfortunately, it may not.  However, this numerical computation gives extremely strong evidence that Computation~\ref{comp:Bertini} is indeed true even without the phrase ``with high numerical accuracy''.  

There are two types of approximations that are used in order to compute the numerical decomposition of a zero-set.  One is the choice of a set of random hyperplanes which cut the space and allow one to look for zero-dimensional solutions to a set of equations. The other type of approximation is the numerical homotopy continuation method which actually searches for the zero-dimensional solutions.

The choice of random hyperplanes amounts to the choice of random numbers from a Zariski open, dense set $S$ of some parameter space rather than choosing some set of points in the complement of $S$.  Since the complement of $S$ is an algebraic set, we know that it must have positive codimension, making it a set of measure zero for any reasonable choice of measure.  Thus, the set of hyperplanes that fail in that they would cause us to miss a component in the zero-set has measure zero, and we say that the choice of hyperplanes will yield the correct result with probability one. 

The second type of approximation that is done in this type of computation is the heart of Bertini and is thoroughly described in \cite{SW05}.  Bertini allows one to set desired accuracy to arbitrary levels, and any computational errors (such as path crossing) are reported. Further, Bertini has additional features such as adaptive precision path tracking, which increase security, \cite{AMP,AMP2}. 

The run for this article used a special equation-by-equation algorithm called regeneration~\cite{regen}.  The run required the following of more than $200,000$ paths and there were no path failures and no crossed paths detected.
In addition, there were no errors in the monodromy or trace test procedures.  The numerical output of our run is contained in the files ``main\_data.txt'' and ``screen\_out.txt'' with our ancillary materials.

We cannot conclude with unquestionable certainty that Computation~\ref{comp:Bertini} holds unconditionally, but we {\em can} state with an extremely high level of confidence that it is correct.  Motivated by this result, we hope to find a direct argument to prove Conjecture~\ref{conj:m6}. 

\subsection{Numerical vs. symbolic computation}

Finally, one might wonder why we chose to use numerical methods to test this conjecture rather than symbolic methods that will provide certainty.  The main reasons are simple: time and space.  Regarding time we expect that without additional ideas to reduce the difficulty of computation, a related calculation using symbolic methods should take at least eight times as long as the calculation in Bertini because Gr\"obner basis algorithms are not completely parallelizable (but for an example of recent progress on this front see \cite{Kredel}). In fact, based on the timings from an ongoing benchmarking project between the Bertini and Singular~\cite{Sing} development teams, we suspect that any symbolic computation will actually take far more than eight times as long.  Regarding the issue of space we must consider data storage at intermediate stages.  While the initial input and final result may be relatively small,  Gr\"obner basis algorithms typically must store large intermediate results for subsequent calculations. On the other hand, homotopy continuation algorithms require a trivial amount of extra data in intermediate stages.  Indeed, the amount of memory used grows linearly with the number of paths tracked (simply because the final point on each path must be stored).   Bertini is thus much less likely to fail due to memory constraints. 

Finally, one could also hope for a (symbolic) certificate of the validity of results obtained by numerical methods. At the EACA School in Tenerife, Spain, Wolfram Decker told us that the development of such certificates is among the current goals of the Singular team, and we hope to be able to use this feature in future work.

\bibliographystyle{amsalpha}
\bibliography{master_bibdata}
\end{document}